\newtheorem{theorem}{Theorem}[section]
\newtheorem{lemma}[theorem]{Lemma}
\newtheorem{definition}{Definition}
\newtheorem{remark}{Remark}
\newtheorem{assumption}[theorem]{Assumption}
\newcommand{\mc}{\mathcal}
\newcommand{\vect}[1]{\operatorname{vec} \left( #1 \right) }
\newcommand{\real}{\mathbb{R}} 
\newcommand{\naturalset}{\mathbb{N}}
\newcommand{\realpos}{\mathbb{R}_{\geq 0}}
\newcommand{\tsp}{\mathsf{T}}
\newcommand{\Rc}{\supscr{R}{C}}
\newcommand{\Rs}{\supscr{R}{S}}
\newcommand{\onev} {\mathds{1}}
\newcommand{\map}[3]{#1: #2 \rightarrow #3}
\newcommand{\setdef}[2]{\{#1 \; : \; #2\}}
\newcommand{\supscr}[2]{{#1}^{\textup{#2}}}
\newcommand{\until}[1]{\{1,\dots,#1\}}
\newcommand*{\QEDB}{\hfill\ensuremath{\square}}%
\title{\LARGE \bf 
Resilience of Traffic Networks with Partially Controlled  Routing}
\author{Gianluca Bianchin, Fabio Pasqualetti, and Soumya Kundu
  \thanks{This material is based upon work supported in part by  the United States 
  Department of Energy  contract DE-AC02-76RL01830 under the Control of 
  Complex Systems Initiative at Pacific Northwest National Laboratory, 
  and in part   by NSF award CNS-1646641.
	Gianluca Bianchin and Fabio Pasqualetti are with the Mechanical 
	Engineering Department, University of California at Riverside,
    \{\href{mailto:gianluca@engr.ucr.edu}{\texttt{gianluca}},
    \href{mailto:fabiopas@engr.ucr.edu}{\texttt{fabiopas\}@engr.ucr.edu}}.  
        Soumya Kundu is with the Pacific Northwest National Laboratory
        \href{mailto:soumya.kundu@pnnl.gov}{\texttt{soumya.kundu@pnnl.gov}}. }}
\begin{document}
\maketitle

\begin{abstract}
This paper investigates the use of Infrastructure-To-Vehicle (I2V) communication 
to 
generate routing suggestions for drivers in transportation systems, with 
the goal of optimizing a measure of overall network congestion.
We define link-wise levels of trust to tolerate the non-cooperative behavior of part 
of the driver population, and we propose a real-time optimization mechanism 
that adapts to the instantaneous network conditions and to sudden changes in the 
levels of trust.
Our framework allows us to quantify the improvement in travel time 
in relation to the degree at which drivers follow the routing suggestions.
We then study the resilience of the system, measured as the smallest change in 
routing choices that results in roads reaching their maximum capacity.
Interestingly, our findings suggest that fluctuations in the extent to which drivers 
follow the provided routing suggestions can cause failures of certain links. 
These results imply that the benefits of using Infrastructure-To-Vehicle 
communication come at the cost of new fragilities, that should be
appropriately addressed in order to guarantee the reliable operation of the 
infrastructure.
\end{abstract}

\section{Introduction}
Transportation systems are fundamental components of modern smart cities, and 
their effective and reliable operation are critical aspects to guarantee the 
development of quickly-growing metropolitan areas.
Recent advances in vehicle technologies, such as Infrastructure-To-Vehicle (I2V) 
communication and Vehicle-To-Vehicle (V2V) communication 
\cite{CW-AK-EV-AMB:17}, set out an enormous potential to overcome the 
inefficiencies of traditional  transportation systems.
Notwithstanding, the development efficient control algorithms capable 
of effectively engaging these capabilities is an extremely-challenging task
due to the tremendous complexity of the interconnections \cite{GB-FP:18},
that often results in suboptimal performance \cite{DAL-SC-RP:18}, and that can 
potentially generate novel fragilities \cite{VB-TH-GS:10,GB-FP:18a;G}.

In this paper, we discuss the use of I2V communication to partially influence the 
routing decisions of the drivers, with the goal of optimizing a measure of overall 
network congestion. 
We define link-wise levels of trust to tolerate the non-cooperative behavior 
of a certain ratio of the drivers, and we develop an optimization-based 
control mechanism to provide real-time routing suggestions based on the current 
state congestion levels.
Differently from traditional approaches for network routing design, our 
methods allow us to take into account quickly-varying traffic volumes, and do 
not require the knowledge of the traffic demands associated with every 
origin-destination pair.
%
%
Moreover, we study the impact of changes in routing that result in roads 
reaching  their maximum capacity, thus leading to traffic jams or cascading failure 
effects. 
We develop a technique to classify the links based on their resilience, and we study 
the fragility of the network against changes in routing.
Surprisingly, our findings demonstrate that networks where the routing is partially 
controlled by a system planner can be more fragile to traffic jam phenomena as 
compared to networks where drivers perform traditional selfish routing choices.

\noindent
\textbf{Related Work}
Routing decisions of traditional human drivers are \textit{non-cooperative}, namely, 
drivers act as a group of distinct agents that make selfish routing decisions with 
the goal of minimizing their individual delay \cite{JGW:52}.
The inefficiencies of such noncooperative behavior are often quantified 
through the \textit{price of anarchy} \cite{TR-ET:02,JRC-ASS-NES:04,GP:07}, a 
measure that captures the cost of suboptimality with respect to the societal 
optimal efficiency.
The availability of V2V and I2V has recently demonstrated the potential to influence 
the traditional behavior of drivers in a transportation system 
\cite{DAL-SC-RP:18,DAL-SC-RP:17}.
In particular, the control of the routing choices was proposed as a 
promising solution to improve the efficiency of the network \cite{DAL-SC-RP:17}
and to enhance its resilience \cite{GC-KS-DA-MAD-EF:13a}.
Differently from this line of previous work, this paper focuses on 
systems operating at non-equilibrium points, on tolerating the presence of 
non-cooperative driver behaviors, and on characterizing~the impact of controlled 
routing on the resilience of the system.

\noindent
\textbf{Contribution}
The contribution of this paper is fourfold. 
First, we formulate and solve an optimization problem to design optimal routing 
suggestions with the goal of minimizing the travel time experienced by all network 
users. 
The optimization problem incorporates link-wise trust parameters that describe the 
extent to which drivers on that link are willing to follow the suggested routing 
policy. 
Second, we develop an online update scheme that takes into account 
instantaneous changes in the levels of trust on the provided routing suggestions. 
Discrepancies between the modeled and actual trust parameters can be the result 
of quickly varying traffic demands, or can be the effect of selfish routing decisions.
Third, we study the resilience of the network, measured as the smallest change in 
the trust parameters that results in roads reaching their maximum capacity. 
We present an efficient technique to approximate the resilience of the network 
links, and we discuss how these quantities can be computed from the output of the 
optimization problem.
Fourth, we demonstrate through simulations that, although partially controlling the 
routing may improve the travel time for all network users, it also results in 
increased network fragility due to possible fluctuations in the trust parameters.

\noindent
\textbf{Organization}
The rest of this paper is organized as follows. 
Section~\ref{sec:problemFormulation} describes the dynamical network 
framework, and formulates the problem of optimal network routing with varying 
levels of trust.
Section~\ref{sec:routingControl} presents a method to numerically solve the 
optimization, and illustrates our real-time update mechanism.
Section~\ref{sec:resilience} is devoted to the study and characterization of the 
network resilience, while Section~\ref{sec:simulations} presents simulations 
results to validate our methods.
Finally, Section~\ref{sec:conclusions} concludes the paper.

\section{Problem Formulation}
\label{sec:problemFormulation}
We model a traffic network with a directed graph 
$\mc G =(\mc V, \mc E)$, where $\mc V = \until m$
denotes the set of nodes, and $\mc E = \until n \subseteq \mc V \times \mc V$ 
denotes the set of edges. 
Nodes of the graph identify traffic junctions, while edges identify sections of roads 
(links) that interconnect two junctions.
An element $(i,j) \in \mc E$ denotes a directed link from node $j$ to node $i$.
We associate to every link $i \in \mc E$ a dynamical equation of the form
\begin{align*}
\dot x_i = \supscr{f}{in}_i(x,t) - \supscr{f}{out}_i(x,t),
\end{align*}
where $t \in \realpos$, $\map{x_i}{\realpos}{\realpos}$ denotes the traffic density of link $i$, 
and $\supscr{f}{in}_i(x,t)$  and $\supscr{f}{out}_i(x,t)$ denote the inflow and 
outflow of the link, respectively. 
%
%
We assume that vehicle inflows enter the network at on-ramp links 
$\supscr{\mc E}{on}$, while vehicle outflows exit the network at off-ramp links 
$\supscr{\mc E}{off}$.
We denote by $\supscr{\mc E}{in}$ the set of internal links that are connected 
through junctions, and assume that $\supscr{\mc E}{on}$, $\supscr{\mc E}{off}$, 
and $\supscr{\mc E}{in}$ are  disjoint sets, with 
$\mc E = \supscr{\mc E}{on} \cup 
				\supscr{\mc E}{off} \cup
				\supscr{\mc E}{in}$
(see Fig.~\ref{fig:networkTopology} for an illustration).
%
The network topology described by $\mc G$ imposes natural constraints on 
the dynamics of the links, where flow is possible only between 
links that are interconnected by a node.
We associate to every pair of links a turning ratio $r_{ij} \in [0,1]$, 
describing the fraction of vehicles entering link $i \in \mc E$ after exiting 
$j \in \mc E$.
We combine the drivers turning preferences into a matrix 
$R=[r_{ij}] \in \real^{n \times n}$, where
\begin{align}
\label{eq:Rentries}
r_{ij} \in [0,1], && r_{ij} \neq 0 \text{ only if } (i,j) \in \mc E.
\end{align}
The conservation of flows at the junctions imposes the following constraints on 
the entries of $R$:
\begin{align}
\label{eq:RrowSum}
& \sum_{i} r_{ij} = 1, \text{ for all } j \in \mc E \setminus \supscr{\mc E}{off},
	\nonumber\\
& \sum_{i} r_{ij} = 0, \text{ for all } j \in \supscr{\mc E}{off}.
\end{align}

We let $\mc R_{\mc G}$ be the set of matrices
\begin{align*}
\mc R_{\mc G} = \setdef{ R=[r_{ij}]  \in \real^{n \times n} }{ r_{ij}
\text{ satisfy  \eqref{eq:Rentries}  and } \eqref{eq:RrowSum}},
\end{align*}
and let $n_r = \Vert R \Vert_0$ denote the number of nonzero entries in matrix $R$.
We assume that the vehicles routing is partially controllable, and denote by 
$\map{\sigma_i}{\realpos}{ [0,1]}$ the ratio of controllable vehicles that instantaneously occupy 
link $i$.
For every $i \in \mc E$, we assume that a fraction $(1-\sigma_i)$ of vehicles 
leaving $i$ will follow a selfish route choice $\supscr{r}{s}_{ij}$, for all $j \in \mc E$, 
while the remaining vehicles can be routed according to the routing 
decisions made by a system planner, namely $\supscr{r}{c}_{ij}$.
The parameter $\sigma_i$ can be interpreted as the (average) extent to which 
drivers follow the routing suggestion $\supscr{r}{c}_{ij}$.
We combine the selfish and controllable routing parameters into matrices
$\Rs \in \real^{n \times n}$ and $\Rc \in \real^{n \times n}$, respectively,
and decompose the matrix of turning preferences as
\begin{align*}
R = \Sigma \Rc + (I-\Sigma)  \Rs,
\end{align*}
where $\Sigma$ is a diagonal matrix 
$\Sigma = \operatorname{Diag} (\sigma_1, \dots, \sigma_n)$. 
Note that the graph topology and sparsity pattern of $R$ impose the following 
constraints on $\Rs$ and $\Rc$:
\begin{align*}
\Rs \in \mc R_{\mc G}, && \Rc \in \mc R_{\mc G}.
\end{align*}
We stress that in this work $\Rc$ is a design parameter containing the set of 
routing suggestions provided by the system planner to influence the drivers routing 
choices.

\begin{remark}{\bf \textit{(Selfish Route Choices)}}
Typically, the selfish behavior of drivers is captured by a 
\textit{Wardrop equilibrium} \cite{JGW:52}, that is, a configuration in which the 
travel time associated to any source-destination path chosen by a nonzero fraction 
of the drivers does not exceed the travel time associated to any other path. 
We remark that, in our settings, such equilibrium configuration is captured by the 
selfish routing matrix $\Rs$.
%
\QEDB
\end{remark}
\smallskip

\begin{figure}[t]
  \centering
    \includegraphics[width=.7\columnwidth]{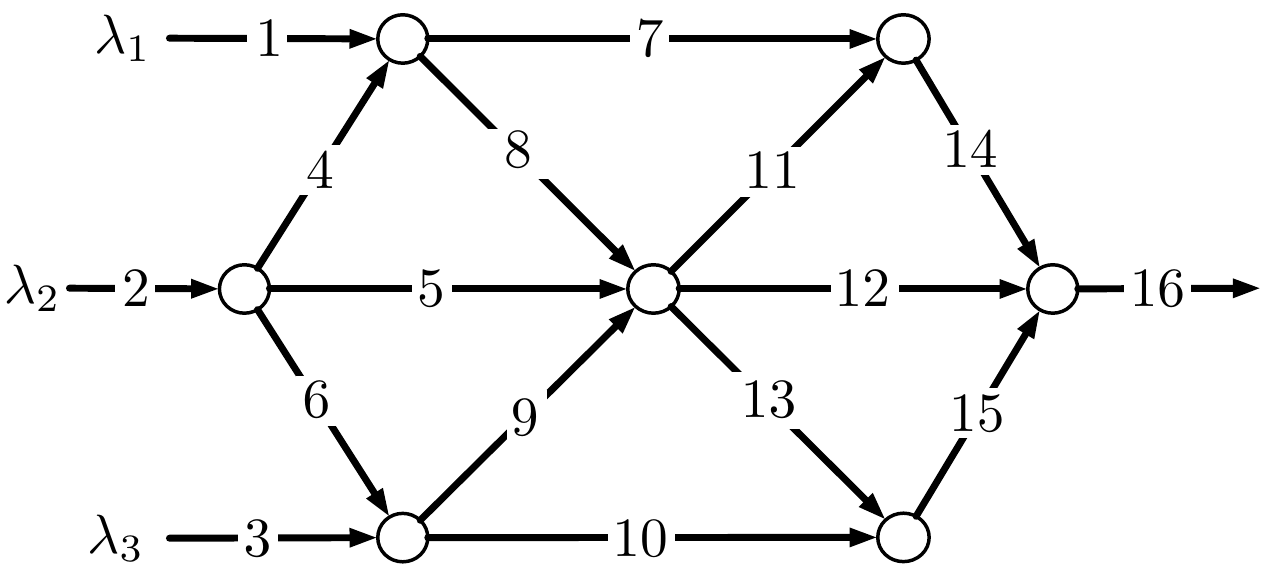}
\caption[]{Example of traffic network interconnection. 
For this network,
$\supscr{\mc E}{on} = \{1,2,3\}$, 
$\supscr{\mc E}{in} = \{4, \dots, 15\}$, and
$\supscr{\mc E}{off} = \{16\}$.}
  \label{fig:networkTopology}
\end{figure}


We adopt Daganzo's Cell Transmission Model 
\cite{CFD:95}, and model the physical characteristic of each link by a 
demand function $d_i(x_i)$ and a supply function $s_i(x_i)$, that represent upper 
bounds on the outflow and inflow of each link, respectively:
\begin{align}
\label{eq:boundsDemandSupply}
\supscr{f}{in}_i(x,t) \leq s_i(x_i), & & \supscr{f}{out}_i(x,t) \leq d_i(x_i).
\end{align}
For every link $i \in \mc E$ we let $B_i := \sup \{ x : s_i(x) > 0 \}$ denote its
saturation density, which corresponds to the jam density of the road.
We model on-ramps $i \in \supscr{\mc E}{on}$ as links 
with infinite supply functions $s_i(x_i) = + \infty$, and denote by 
$\lambda_i(t)$ the corresponding inflow rate.
Then, road inflows and outflows are related by means of the following 
equations 
\begin{align}\label{eq:inflowsOutflows}
\supscr{f}{in}_i (x,t) = \begin{cases}
\lambda_i(t), & i \in \supscr{\mc E}{on},\\
\sum_{j} r_{ij} \supscr{f}{out}_{j}(x,t), & 
		i \in \mc E \setminus \supscr{\mc E}{on},\\
\end{cases}
\end{align}
which capture the conservation of flows at the junctions.
We model the outflows  from the links as 
\begin{align}
\label{eq:f_out}
\supscr{f}{out}_{i}(x,t) = \kappa_i(x) d_i(x_i),
\end{align}
where $\kappa_i(x) \in [0,1]$ is a parameter that enforces the bounds
\eqref{eq:boundsDemandSupply} or, in other words,
guarantees that every outgoing link has adequate supply to accommodate the
demand of its incoming links.
Different models for $\kappa_i(x)$ have been proposed in the literature, 
and prevalent roles have been played by FIFO policies 
\cite{SC-MA:15} and proportional allocation rules 
\cite{EL-GC-KS:14}.
We combine the link dynamical equations with 
\eqref{eq:inflowsOutflows} and \eqref{eq:f_out} to derive the overall network
dynamics
\begin{align}
\label{eq:networkDynamics}
\dot x = (R-I) \supscr{f}{}(x,t) + \lambda,
\end{align}
where $I \in \real^{n \times n}$ is the identity matrix,
$x = [x_1~\dots~x_n]^\tsp$ is the vector of link densities, 
$\supscr{f}{} = [\supscr{f}{out}_1~\dots~\supscr{f}{out}_n]^\tsp$ is the 
vector of link outflows, and
$\lambda = [\lambda_1~\dots~\lambda_n]^\tsp$ denotes the vector of 
exogenous inflows, where we let $\lambda_i=0$ if $i \not \in \supscr{\mc E}{on}$.

We consider the network performance measured by the 
\emph{Total Travel Time} (TTT),
\begin{align*}
\text{TTT} &:= \int_0^{\mc H} x_1(t) + \dots + x_n(t) ~ dt,
\end{align*}
which is a measure of the delay experienced by all users \cite{GG-RH:06}, 
and we focus on the problem of designing the matrix of turning preferences in a 
way that
\begin{subequations}\label{opt:informal}
\begin{align}
 \min_{\Rc} \;\;\;\;\;\;\;\;\;
  		& \text{TTT}
  		\nonumber \\[.3em]
\text{subject to} \;\;\;\;
 	& \dot x = (R-I) \supscr{f}{}(x,t) + \lambda,
  			\label{opt:informal_constr_a}\\
	& R = \Sigma \Rc + (I-\Sigma)  \Rs ,
  			\label{opt:informal_constr_c}\\
  	& \Rc \in \mc R_{\mc G},   			
  			\label{opt:informal_constr_d}\\
  	& x \leq B,			
  			\label{opt:informal_constr_e}
\end{align}
\end{subequations}
where $\mc H$ is the control horizon,  $x(0) = x_0$ is the (given) network initial 
configuration, and  $B = [B_1 \dots B_n]^\tsp$ denotes the vector of jam densities.
From a real-time control and implementation perspective, solving 
\eqref{opt:informal} sets out a number of challenges.
First, the length of the optimization horizon $\mc H$ is a fundamental 
parameter that should be accurately chosen. One should chose $\mc H$ 
adequately large to include all relevant system dynamics, but unnecessarily large 
values of $\mc H$ can drastically increase the computational burden.
Second, rapid changes in traffic volumes and driver preferences require 
the development of control mechanisms that are capable to adapt in real-time 
to sudden variations of $\sigma$.
%
%
In fact, the performance of the optimization strongly depends on 
$\sigma$, and fluctuations in this parameter can lead to considerable variability in 
network performance and efficiency.

To study the effects of fluctuations in $\sigma$, in the second part of this paper we 
consider the problem of quantifying the fragility of the network against changes in 
the trust levels that result in links reaching their jam density.
We assume that a link irreversibly fails if it reaches its jam density, and argue 
that such failure may propagate in the network and potentially cause a cascading 
failure effect.
We measure the network resilience $\rho( \mc G, x_0)$ as the $L^1$-norm
of the smallest variation in $\sigma$ that results in such failure phenomena, that is,
\begin{align*}
\rho( \mc G, x_0) :=\;\;\;\;\; \min_{\tilde \sigma} \;\;\;\;\;\;\;\;\;
  		& \| \tilde \sigma - \sigma \|_1  \\[.3em]
\text{such that} \;\;\;\;  	& \dot x = (R-I) \supscr{f}{}(x,t) + \lambda,\\
	& R = \Sigma \Rc + (I-\Sigma)  \Rs,\\
 	& x_i \geq B_i, 
\end{align*}
where $x(0) = x_0$, $t \in [0, \mc H]$, and $i \in \until n$.

\section{Design of the Turning Preferences}
\label{sec:routingControl}
In this section we present a method to numerically solve the optimization problem 
\eqref{opt:informal}, and illustrate an online-update technique to address the 
control challenges outlined above.

\subsection{Computing Optimal Routing Suggestions}
We begin by recasting the optimization problem \eqref{opt:informal} in a way that 
allows us to numerically compute its solutions.
We perform three simplifying steps, described next.

First, in order to generate a tractable prediction of the time evolution of the 
network state, we discretize \eqref{eq:networkDynamics} by means of
the Euler discretization technique. 
We use a sampling time $T_s \in \real_{>0}$ that is chosen to guarantee the 
Courant-Friedrichs-Lewy assumption $\max_i \frac{v_i T_s}{L_i} \leq 1$ for all links
\cite{CFD:95}, where $v_i \in \realpos$ and $L_i \in \real_{>0}$ denote the 
maximum speed and the length of the section of road, respectively.
Let $\vect {R} = [r_{11} \dots r_{n1}~r_{12} \dots r_{nn}]$ denote the vectorization of
matrix $R=[r_{ij}]$, and let $t_k = k T_s $, $k \in \naturalset$.
Then, the time-evolution of \eqref{eq:networkDynamics} from 
$t_k$ to $t_{k+1} = t_k + T_s$ can be discretized as
\begin{equation}
\label{eq:discreteUpdate}
x_{k+1} = x_k + T_s ((R_k - I) f(x_k) + \lambda_k ) 
:= \mc F(x_k, r_k, \lambda_k),
\end{equation}
where $r_k = \vect{R_k}$.
We remark that the dependency on time of the routing matrix is the result of 
time-varying $\sigma$.

Second, we vectorize equation \eqref{opt:informal_constr_c} and let
\begin{align*}
r_k = (\Sigma_k^\tsp \otimes I) \supscr{r}{c} + 
		((I-\Sigma_k)^\tsp \otimes I) \supscr{r}{s} := 
		\Psi(\sigma_k, \supscr{r}{s}, \supscr{r}{c}),
\end{align*}
where $\supscr{r}{c} = \vect \Rc$, $\supscr{r}{s} = \vect \Rs$, the symbol 
$\otimes$ denotes the Kronecker product, and where we 
used the identity  $\vect {AXB} = (B^\tsp \otimes  A) \vect X$ for 
matrices $A$, $X$, and $B$ of appropriate dimensions.


Third, we observe that the Euler discretization technique employed in 
\eqref{eq:discreteUpdate} preserves the sparsity pattern of $\Rc$, and 
we rewrite the sparsity constraints \eqref{opt:informal_constr_d} as 
\begin{align*}
  	& \sum_{i} r_{ij}^\text{c} = b_j,   			
  	& 0 \leq \supscr{r_{ij}}{\!\!\!\!\!c} \leq 1,	& \;\;\; (i,j) \in \mc E,
\end{align*}
where 
$b_j = 1$ if $j \in \mc E \setminus \supscr{\mc E}{off}$, 
and $b_j=0$ if $j \in \supscr{\mc E}{off}$.

Finally, we recast the optimization problem \eqref{opt:informal}
by using the discretized dynamics as
\begin{subequations} \label{opt:discrete}
\begin{align}
\min_{\supscr{r}{c}} \;\;\;\;\;\;\;\;\;
  		& \sum_{k=1}^h  \onev^\tsp x_k			\nonumber\\[.3em]
\text{subject to} \;\;\;\;
 	& x_{k+1} = \mc F(x_k, r_k, \lambda_k), 							& k = 1, \dots , h,   	
 					\label{opt_discrete_a}\\
	& r_k = \Psi(\sigma_k, \supscr{r}{s}, \supscr{r}{c}),  		& k = 1, \dots , h,		
					\label{opt_discrete_b}\\
  	& \sum_{i} r_{ij} = b_j,   				& j = 1, \dots , n,					
  					\label{opt_discrete_c}\\
  	& 0 \leq \supscr{r_{ij}}{\!\!\!\!\!c} \leq 1,	& (i,j) \in \mc E,		
  					\label{opt_discrete_d}\\
  	& x_k \leq B, 																	& k = 1, \dots , h,
  					\label{opt_discrete_e}
\end{align}
\end{subequations}
where $\onev \in \real^n$ denotes the vector of all ones, and $h$ and $T_s$ are 
chosen so that $h T_s = \mc H$.
As discussed in e.g. \cite{AH-BDS-HH:05}, the constraints 
\eqref{opt_discrete_a} are often nonconvex in the decision variables.
Thus, the optimization problem \eqref{opt:discrete} is of the form of a 
nonconvex nonlinear programming optimization problem, over 
$n_r = \Vert R \Vert_0$ decision variables, and can be solved numerically through 
common nonlinear optimization solvers, such as interior-point methods 
\cite{AW-BA-LTB:06}.

\subsection{Online Update Mechanism}
In order to take into account for the quick variability of the parameter $\sigma$ 
and to deal with the considerable computational effort required to determine 
the solution to \eqref{opt:discrete}, we propose an adaptive control scheme that 
generates real-time updates based on the instantaneous changes in $\sigma$. 
The proposed adaptive mechanism is outlined in 
Fig.~\ref{fig:blockDiagramOnlineUpdate}, and is structured as follows.
We assume that a central processing unit is in charge of computing 
$R^{\text{C}*}(\sigma_0)$, that is, the solution to the optimization problem 
\eqref{opt:discrete} with a given (fixed) set of trust parameters $\sigma_0$.
The underlying choice for $\sigma_0$ can reflect 
the current network conditions, or can be dictated by the availability of historical 
data.
Moreover, we assume that the solution $R^{\text{C}*}(\sigma_0)$ is intermittently 
made available at time instants $t = k T_c$,  where $T_c \in \real_{>0}$ is the 
time required to solve the optimization.
We are interested in constructing an efficient mechanism to determine 
$R^{\text{C}*}(\sigma)$, the optimal solution to \eqref{opt:discrete}
with the instantaneous value of $\sigma$, by updating $R^{\text{C}*}(\sigma_0)$.
%
%
\begin{figure}[t]
  \centering
    \includegraphics[width=.9\columnwidth]{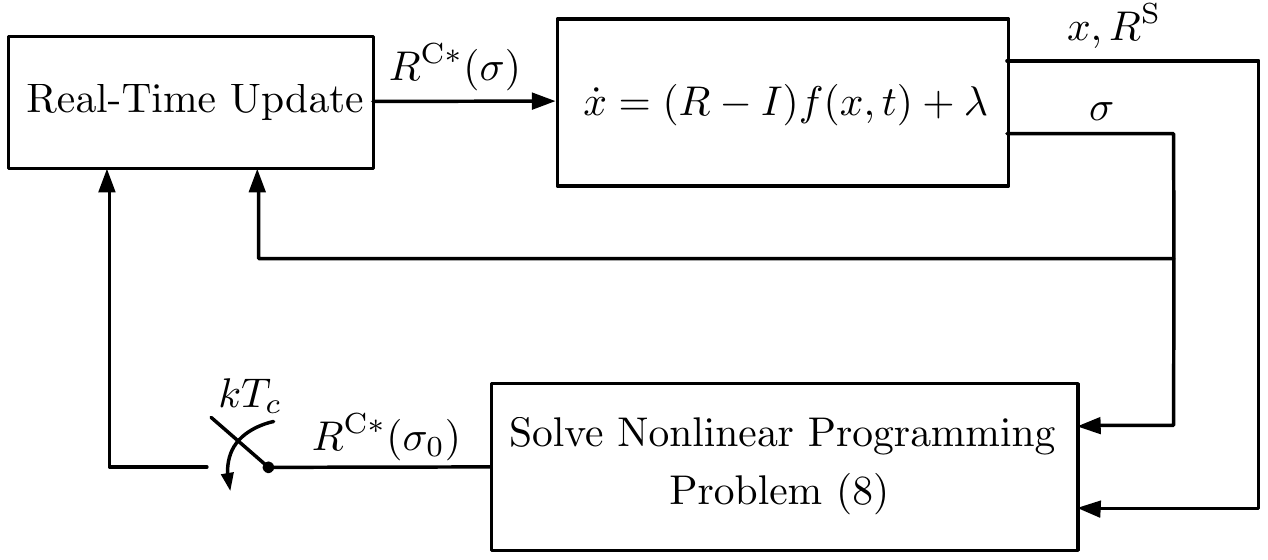}
\caption[]{Real-time update scheme.}
  \label{fig:blockDiagramOnlineUpdate}
\end{figure}
Our online update method is motivated by the fact that $\sigma$ is 
subject to small variations from the nominal value $\sigma_0$.
In fact, the following inequality follows from  \eqref{eq:Rentries}
\begin{align*}
\| \sigma - \sigma_0 \| \leq 
\Vert [1 \cdots 1]^\tsp\| = \sqrt{n_r}.
\end{align*}

Next, we derive our online update mechanism.
We denote in compact form by
\begin{align*}
f_0(\supscr{r}{c}, \hat x, \sigma)	= \sum_{k=1}^h \onev^\tsp x_k, \quad
g(\supscr{r}{c}, \hat x, \sigma) = 
\begin{bmatrix}
\supscr{r_{ij}}{\!\!\!\!\!c} - 1 \\ 
- \supscr{r_{ij}}{\!\!\!\!\!c}  \\
x_k - B
\end{bmatrix},\\
h(\supscr{r}{c}, \hat x, \sigma) = 
\begin{bmatrix}
x_{k+1} - \mc F(x_k, r_k, \lambda_k) \\ 
r_k - \Psi(\sigma, \supscr{r}{s}, \supscr{r}{c}) \\
\sum_{i} r_{ij} - b_j
\end{bmatrix},
\end{align*}
where $\hat x = [x_1^\tsp \dots x_h^\tsp]^\tsp \in \real^{n h}$ denotes the joint 
vector of model-prediction variables, and rewrite \eqref{opt:discrete} as
\begin{align}
 \min_{\supscr{r}{c}} \;\;\;\;\;\;\;\;\;
  		& f_0(\supscr{r}{c}, \hat x, \sigma)	\nonumber\\[.3em]
\text{subject to} \;\;\;\;
 	& g_i(\supscr{r}{c}, \hat x, \sigma) \leq 0, & i \in \until q, \nonumber\\
 	 	& h_j(\supscr{r}{c},\hat x,\sigma) = 0, & j \in \until p,
 	\label{opt:compact}
\end{align}
where we have made explicit the dependency of the optimization problem 
on the decision variables $\supscr{r}{c}$, on the prediction variables $\hat x$, and 
on the parameter $\sigma$.
To characterize the solutions to \eqref{opt:compact}, we compose the Lagrangian 
\begin{align*}
\mc L(\supscr{r}{c}, \hat x,\sigma, w, u) = 
f_0&(\supscr{r}{c}, \hat x, \sigma)	 +\\
&  u^\tsp g(\supscr{r}{c},\hat x,\sigma) 
+ w^\tsp h(\supscr{r}{c},\hat x,\sigma),
\end{align*}
where 
$u = [u_1 \dots u_q]^\tsp$  and $w = [w_1 \dots w_p]^\tsp$ are the vectors of 
Lagrange Multipliers, and we write the first order Karush-Kuhn-Tucker (KKT) 
conditions:
\begin{align*}
\nabla \mc L(r^{\text{c}*}, \hat x^*,\sigma_0, w^*, u^*) = 0, \nonumber \\
u_i g_i(r^{\text{c}*}, \hat x^*, \sigma_0) = 0, \nonumber\\
h_j(r^{\text{c}*}, \hat x^*, \sigma_0) = 0, 
\end{align*}
with the additional inequalities $u_i^* \geq 0$, and 
$g_i(r^{\text{c}*}, \hat x^*, \sigma_0) \leq 0$, where 
$\nabla = [\partial / \partial r^{\text{c}}_1 
		\dots 
		\partial / \partial r^{\text{c}}_{n_r}]^\tsp$ 
denotes the  gradient operator with respect to the decision variables 
$\supscr{r}{c}$. 
We denote the set of KKT equality conditions 
in compact form as 
\begin{align}
\label{eq:compactKKT}
F(r^{\text{c}*}, \hat x^*,\sigma_0, u^*, w^*) = 0,
\end{align}
and note that \eqref{eq:compactKKT} is an implicit equation that characterizes the 
optimal solutions to \eqref{opt:compact}. 
Finally, by letting $y = [\supscr{r}{c}(\sigma)~ u(\sigma)~w(\sigma)]$ and by 
assuming that \eqref{eq:compactKKT} holds for $\sigma$ near $\sigma_0$,
we compute the total derivative of the implicit function \eqref{eq:compactKKT} 
with respect to $\sigma$ to obtain the following relationship that holds at 
optimality:
\begin{align*}
M(\sigma) \frac{d y}{d \sigma} + N(\sigma) = 0,
\end{align*}
where the matrices 
$M (\sigma) = [\partial F_i / \partial y_j]$, $dy/d\sigma = [dy_i / d \sigma_j]$, and 
$N (\sigma) = [\partial F_i / \partial \sigma_j]$.
Finally, to formalize our online update rule we make the following classical 
assumption (see e.g. \cite{AVF-GPM:90}), which guarantees:
(i) that $r^{\text{c}*}$ is a local isolated minimizing point, 
(ii) the uniqueness of the Lagrange Multipliers, and 
(iii) the invertibility of matrix $M(\sigma_0)$.

\begin{assumption}{\bf \textit{(Second Order Minimizer Point)}}
\label{assumptions:KKT}

\noindent \textit{(Second-order KKT conditions)} 
The inequality 
$v^\tsp \nabla^2 \mc L(r^{\text{c}*}, \hat x^*,\sigma, w^*, u^*) v> 0$
holds for every vector $v \in \real^{n+m+p}$, $v \neq 0$, that satisfies
\begin{align*}
v^\tsp \nabla g_i(r^{\text{c}*}, \hat x^*, \sigma_0) & \leq 0, 
\text{ for all $i$ where } u_i^* = 0, \\
v^\tsp \nabla g_i(r^{\text{c}*}, \hat x^*, \sigma_0) &= 0, 
\text{ for all $i$ where } u_i^* > 0, \\
v^\tsp \nabla h(r^{\text{c}*}, \hat x^*, \sigma_0) &= 0.
\end{align*}
\noindent \textit{(Constraints independence)}
The vectors $\nabla g(r^{\text{c}*}, \hat x^*, \sigma_0)$ and 
$\nabla h(r^{\text{c}*}, \hat x^*, \sigma_0)$ are linearly independent.

\noindent \textit{(Strict complementary slackness)} If 
$g_i(r^{\text{c}*}, \hat x^*, \sigma_0) = 0$, then $u_i^* > 0$.
\QEDB
\end{assumption}
%
%
\begin{lemma}{\textbf{\textit{(Linear Update Rule)}}}
\label{thm:linearUpdate}
Let Assumption~\ref{assumptions:KKT} hold, 
let $r^{\text{c}*}(\sigma_0)$ denote a solution to \eqref{opt:discrete} with 
$\sigma = \sigma_0$, and let $ \eta := M^{-1}(\sigma_0) N(\sigma_0)$ 
be partitioned as
\begin{align*}
\eta = \begin{bmatrix}
\eta_{1} \\
\eta_{2} \\
\eta_{3}
\end{bmatrix},
\end{align*}
where $\eta_{1} \in \real^{n_r \times n} $,
$\eta_{2} \in \real^{q \times n} $, and
$\eta_{3} \in \real^{p \times n} $.
Then, 
\begin{align}
\label{eq:linearUpdate}
r^{\text{c}*}(\sigma)  =
r^{\text{c}*}(\sigma_0) + 
\eta_1 (\sigma - \sigma_0) + o(\| \sigma - \sigma_0\|^2 ).
\end{align}
\end{lemma}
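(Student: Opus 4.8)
The plan is to apply the classical sensitivity theorem for parametric nonlinear programs (the implicit function theorem applied to the KKT system), exactly in the spirit of Fiacco's work \cite{AVF-GPM:90}. The key is that Assumption~\ref{assumptions:KKT} — second-order sufficiency, linear independence of active constraint gradients, and strict complementary slackness — is precisely what guarantees that the primal-dual solution $y(\sigma) = [r^{\text{c}*}(\sigma)~u(\sigma)~w(\sigma)]$ is a well-defined, once-continuously-differentiable function of $\sigma$ in a neighborhood of $\sigma_0$, and that the Jacobian $M(\sigma_0)$ of the KKT map $F$ with respect to $y$ is nonsingular there.

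Concretely, I would proceed in the following steps. First, I would note that at $\sigma_0$ the triple $(r^{\text{c}*}, \hat x^*, u^*, w^*)$ satisfies the equality part of the KKT system \eqref{eq:compactKKT}, and that by strict complementary slackness the active/inactive index sets are locally constant, so that near $\sigma_0$ the system \eqref{eq:compactKKT} reduces to a square system of smooth equations in $(y,\sigma)$. Second, I would invoke the implicit function theorem: since $M(\sigma_0) = [\partial F_i/\partial y_j]$ is invertible (a consequence of the second-order conditions and constraint qualification, as asserted in the paragraph preceding the assumption), there exists a unique $C^1$ map $\sigma \mapsto y(\sigma)$ with $y(\sigma_0) = [r^{\text{c}*}(\sigma_0)~u^*~w^*]$ solving $F(y(\sigma),\sigma) = 0$. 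Third, differentiating the identity $F(y(\sigma),\sigma) = 0$ and using the chain rule gives $M(\sigma)\,\frac{dy}{d\sigma} + N(\sigma) = 0$, hence $\frac{dy}{d\sigma}\big|_{\sigma_0} = -M^{-1}(\sigma_0) N(\sigma_0) = -\eta$; reading off the block corresponding to the decision variables yields $\frac{d r^{\text{c}*}}{d\sigma}\big|_{\sigma_0} = -\eta_1$. Finally, a first-order Taylor expansion of the $C^1$ map $r^{\text{c}*}(\cdot)$ about $\sigma_0$ gives $r^{\text{c}*}(\sigma) = r^{\text{c}*}(\sigma_0) - \eta_1(\sigma-\sigma_0) + o(\|\sigma-\sigma_0\|)$, which is \eqref{eq:linearUpdate} (note the remainder is $o(\|\sigma-\sigma_0\|)$, i.e.\ first order; the $o(\|\sigma-\sigma_0\|^2)$ written in the statement would additionally require $C^2$ dependence, which follows if the problem data are twice differentiable).

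There is a sign bookkeeping issue I would need to reconcile: the lemma defines $\eta := M^{-1}N$ and writes $r^{\text{c}*}(\sigma) = r^{\text{c}*}(\sigma_0) + \eta_1(\sigma-\sigma_0) + \dots$, whereas differentiating $M\frac{dy}{d\sigma} + N = 0$ gives $\frac{dy}{d\sigma} = -M^{-1}N$. So either $N$ is defined with the opposite sign convention, or the update should read $r^{\text{c}*}(\sigma_0) - \eta_1(\sigma-\sigma_0)$; I would state the derivative computation carefully and adopt whichever convention makes the formula consistent with the definition of $N(\sigma) = [\partial F_i/\partial\sigma_j]$.

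The main obstacle — and the only genuinely nontrivial part — is establishing the nonsingularity of $M(\sigma_0)$ and the local differentiability of the solution map, i.e.\ justifying that Assumption~\ref{assumptions:KKT} indeed delivers items (i)–(iii) claimed before the assumption. This is the content of the classical sensitivity result of Fiacco and McCormick, and the cleanest route is simply to cite it: under LICQ, strict complementarity, and the second-order sufficient condition, the KKT system restricted to the (locally fixed) active set has an invertible Jacobian, because that Jacobian is a bordered matrix whose invertibility is equivalent to the second-order sufficiency on the critical cone together with full row rank of the active-constraint Jacobian. Everything else — forming $F$, differentiating, partitioning $\eta$, Taylor-expanding — is routine once this structural fact is in hand.
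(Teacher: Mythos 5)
Your proposal is correct and follows essentially the same route as the paper, which does not give a separate proof of Lemma~\ref{thm:linearUpdate} but embeds it in the preceding text: the total derivative of the KKT system \eqref{eq:compactKKT} yields $M(\sigma)\,dy/d\sigma + N(\sigma) = 0$, Assumption~\ref{assumptions:KKT} (via the Fiacco--McCormick sensitivity theorem) supplies the invertibility of $M(\sigma_0)$ and the local differentiability of the solution map, and the update formula is the first-order Taylor expansion of the $r^{\text{c}}$-block. Your two side observations --- that $dy/d\sigma = -M^{-1}N$ so the printed formula carries a sign inconsistent with the stated definition of $\eta$, and that the remainder of a first-order expansion should be $O(\|\sigma-\sigma_0\|^2)$ rather than $o(\|\sigma-\sigma_0\|^2)$ --- are legitimate corrections to the statement as written, not gaps in your argument.
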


\smallskip
We argue that the update \eqref{eq:linearUpdate} can be computed through 
simple vector multiplications, and thus is significantly more efficient than solving 
\eqref{opt:discrete}.
The accuracy of the linear approximation rule is numerically validated in 
Fig.~\ref{fig:linearApproximation}, which demonstrates the quadratic decay of  the 
approximation error as $\sigma$ approaches $\sigma_0$ (see 
Section~\ref{sec:simulations} for a thorough discussion).

\begin{figure}[bt]
  \centering
    \includegraphics[width=.85\columnwidth]{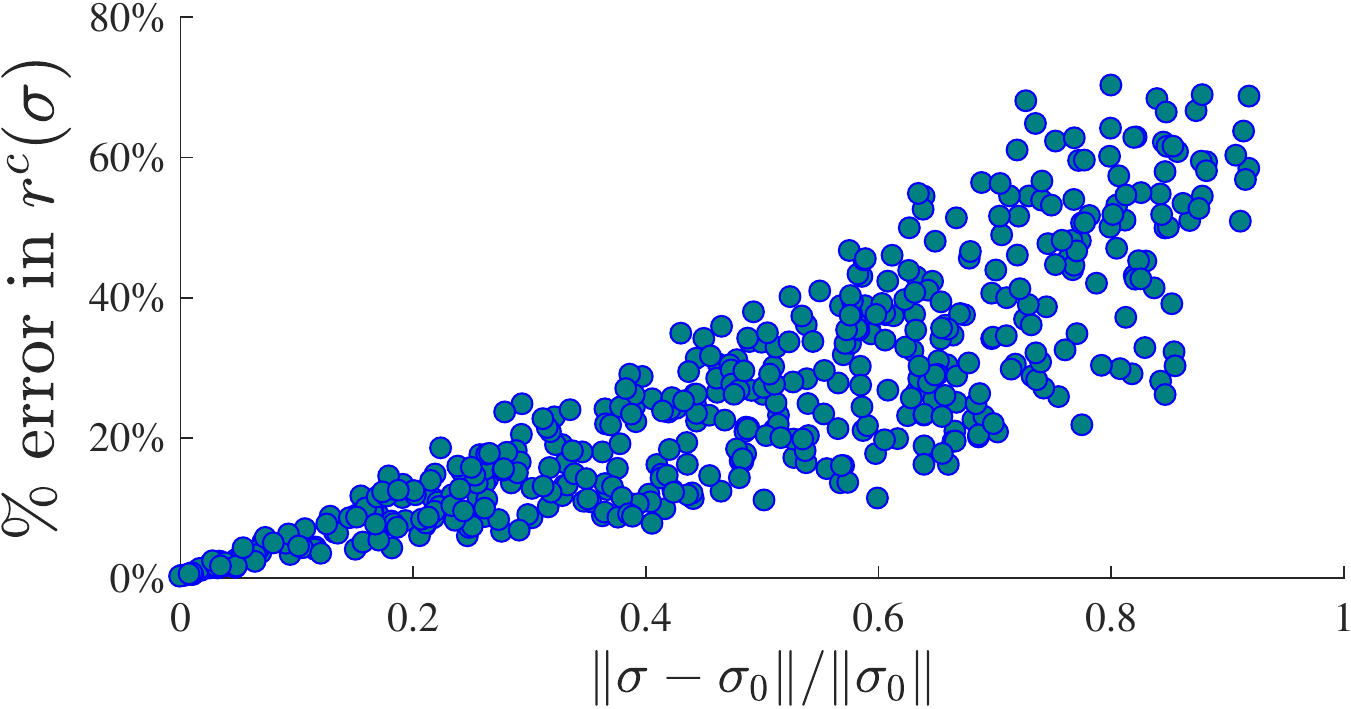}
\caption[]{Numerical validation of the update rule 
\eqref{eq:linearUpdate}. }
  \label{fig:linearApproximation}
\end{figure}

\section{Network Resilience}
\label{sec:resilience}
In this section, we study the resilience of the network against changes in the 
degrees of  trust of the drivers, 
and we illustrate a technique that allows us to classify the links 
in relation to their resilience properties.
%
%
We start with the following definition of margin of resilience of a network link.

\begin{definition}{\bf \textit{(Links Margin of Resilience)}}
\label{def:marginResilienceLinks}
Let $i \in \mc E$, and let $B_i$ be its jam density.
The margin of resilience of link $i$ is
\begin{align*}
\rho_i(x_0) :=\;\;\;\;\; \min_{\sigma} \;\;\;\;\;\;\;\;\;
  		& \| \sigma - \sigma_0 \|_1  \\[.3em]
  		\text{such that} \;\;\;\;  	& \dot x = (R-I) \supscr{f}{}(x,t) + \lambda,\\
	& R = \Sigma \Rc + (I-\Sigma)  \Rs,\\
 	& x_i \geq B_i, \text{ for some } t \in [0, \mc H].
\end{align*}
\QEDB
\end{definition}
In other words, the resilience of a certain link is defined as the smallest 
change in $\sigma$ that generates its jam failure.
Next, we present a lower bound on the margin of resilience of the links.
Our approach is based on the real-time control rule \eqref{eq:linearUpdate}, and on 
first-order approximations of the constraints.
\begin{theorem}{\bf \textit{(Lower Bound on Margin of Resilience)}}
\label{thm:lowerBoundResilience}
Let $i \in \mc E$, let 
$\mc F (x_k, r_k, \lambda_k) = 
[\mc F_1(x_k, r_k, \lambda_k) \dots \mc F_n(x_k, r_k, \lambda_k)]^\tsp$, and let
\begin{align*}
\Psi_i(r_k, x_k, \lambda_k, \sigma)  := 
\frac{\partial \mc F_i(x_k, r_k, \lambda_k)}{\partial \sigma} + 
\frac{\partial \mc F_i (x_k, r_k, \lambda_k)}{\partial \supscr{r}{c}} ~
\eta_1 ,
\end{align*}
where $\eta_1$ is defined in \eqref{eq:linearUpdate}.
Then,
\begin{align*}
\rho_i(x_0)  \geq  \min_k  \frac{B_i - \mc F_i (x_k, r_k, \lambda_k)}{
\left\Vert \Psi_i(k, \lambda, \sigma_0) \right\Vert_\infty} .
\end{align*}
\vspace{.1cm}
\end{theorem}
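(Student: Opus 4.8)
The plan is to bound from below the quantity $\| \sigma - \sigma_0 \|_1$ for an \emph{arbitrary} perturbation $\sigma$ that drives link $i$ to its jam density, and then pass to the infimum over all such $\sigma$, which by Definition~\ref{def:marginResilienceLinks} is exactly $\rho_i(x_0)$. By the Euler discretization \eqref{eq:discreteUpdate}, link $i$ fails whenever there is an index $k$ with $\mc F_i(x_k, r_k, \lambda_k) \geq B_i$, whereas along the nominal trajectory produced by $\sigma_0$ the feasibility constraint \eqref{opt_discrete_e} guarantees $\mc F_i(x_k, r_k, \lambda_k) = x_{i,k+1} \leq B_i$ for every $k$; in particular each numerator $B_i - \mc F_i(x_k, r_k, \lambda_k)$ appearing in the claimed bound is nonnegative. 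Fix one such failure index $k$ for the perturbation $\sigma$ under consideration.

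First I would substitute the linear update rule of Lemma~\ref{thm:linearUpdate}, namely $\supscr{r}{c}(\sigma) = \supscr{r}{c}(\sigma_0) + \eta_1 (\sigma - \sigma_0) + o(\| \sigma - \sigma_0 \|)$, into the parametrization $r_k = \Psi(\sigma, \supscr{r}{s}, \supscr{r}{c})$, so that $r_k$ becomes a function of $\sigma$ alone (with $\supscr{r}{s}$ frozen). Taylor-expanding the scalar map $\sigma \mapsto \mc F_i(x_k, r_k, \lambda_k)$ about $\sigma_0$ and applying the chain rule, the explicit dependence of $r_k$ on $\sigma$ through $\Psi$ contributes the term $\partial \mc F_i / \partial \sigma$, while the implicit dependence through $\supscr{r}{c}$, with Jacobian $\eta_1$, contributes $(\partial \mc F_i / \partial \supscr{r}{c})\, \eta_1$; these are precisely the two pieces collected in $\Psi_i(r_k, x_k, \lambda_k, \sigma_0)$, so that
\begin{align*}
\mc F_i(x_k, r_k, \lambda_k)\big|_{\sigma} = \mc F_i(x_k, r_k, \lambda_k)\big|_{\sigma_0} + \Psi_i(k, \lambda, \sigma_0)^\tsp (\sigma - \sigma_0) + o(\| \sigma - \sigma_0 \|).
\end{align*}
Imposing the failure inequality $\mc F_i(x_k, r_k, \lambda_k)|_{\sigma} \geq B_i$ and rearranging gives $\Psi_i(k, \lambda, \sigma_0)^\tsp (\sigma - \sigma_0) \geq B_i - \mc F_i(x_k, r_k, \lambda_k) - o(\| \sigma - \sigma_0 \|)$.

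The last step is to apply Hölder's inequality, $\Psi_i(k, \lambda, \sigma_0)^\tsp (\sigma - \sigma_0) \leq \| \Psi_i(k, \lambda, \sigma_0) \|_\infty \, \| \sigma - \sigma_0 \|_1$, and divide by $\| \Psi_i(k, \lambda, \sigma_0) \|_\infty$ (when this norm vanishes the bound is $+\infty$ and holds trivially, reflecting that link $i$ is insensitive to $\sigma$ at first order). This yields, for the chosen failure index, $\| \sigma - \sigma_0 \|_1 \geq (B_i - \mc F_i(x_k, r_k, \lambda_k)) / \| \Psi_i(k, \lambda, \sigma_0) \|_\infty \geq \min_k (B_i - \mc F_i(x_k, r_k, \lambda_k)) / \| \Psi_i(k, \lambda, \sigma_0) \|_\infty$. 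Since this inequality holds for \emph{every} $\sigma$ that triggers a failure of link $i$, it holds for the minimizer in Definition~\ref{def:marginResilienceLinks}, which proves the stated bound on $\rho_i(x_0)$.

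I expect the genuinely delicate point to be the bookkeeping of the state trajectory: perturbing $\sigma$ shifts not only $r_k$ but also the intermediate densities $x_1, \dots, x_k$ that feed into $\mc F_i$, whereas $\Psi_i$ retains only the sensitivity routed through $r_k$ and $\supscr{r}{c}$. For $k = 0$ this is a non-issue because $x_0$ is the given initial condition; for $k \geq 1$ it has to be absorbed into the first-order (``approximation of the constraints'') character of the bound — either by viewing $\hat x$ as eliminated through the dynamics equality constraints already embedded in the KKT system \eqref{eq:compactKKT} that defines $\eta_1$, or by treating the induced state variation as a higher-order correction carried in the $o(\cdot)$ term. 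I would state this modeling convention explicitly before the Taylor expansion so that the derivative $\Psi_i$ is unambiguous; the remaining manipulations (chain rule, Hölder, minimization over $k$) are routine.
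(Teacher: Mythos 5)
Your proposal follows essentially the same route as the paper's own proof: recast the failure condition as $\mc F_i(x_k, r_k(\sigma), \lambda_k) \geq B_i$ for some $k$, Taylor-expand about $\sigma_0$ using the chain rule together with $d\supscr{r}{c}/d\sigma = \eta_1$ from Lemma~\ref{thm:linearUpdate} to identify $\Psi_i$, rearrange, apply H\"older's inequality, and minimize over $k$. The one point where you go beyond the paper is the closing caveat about the perturbation of the intermediate states $x_1, \dots, x_k$ --- the paper's proof silently treats $x_k$ as fixed and absorbs nothing into the higher-order term, so your explicit flagging of that convention is a genuine (and welcome) tightening rather than a deviation.
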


\begin{proof}
We first recast the notion of margin of resilience in terms of the discretized system 
\eqref{opt:discrete}. The margin of resilience of link $i$ is the smallest change 
$\| \sigma - \sigma_0 \|_1$ such that
\begin{align}
\label{eq:marginResDiscrete}
\mc F_i (x_k, r_k(\sigma), \lambda_k) \geq B_i,
\end{align}  
for some $k \in \until h$.
We then rewrite $\mc F_i (x_k, r_k(\sigma), \lambda_k)$ by taking its 
Taylor expansion for around $\sigma_0$
\begin{align*}
\mc F_i (x_k,  r_k(& \sigma) , \lambda_k) 
= \mc F_i (x_k, r_k(\sigma_0), \lambda_k) + \\
	&\underbrace{ \left. \frac{d \mc F_i }{d \sigma}(x_k, r_k(\sigma), \lambda_k) 
	\right \vert_{\sigma = \sigma_0} }_{\Psi_i(r_k, x_k, \lambda_k, \sigma)  }
 \delta_\sigma  	+  o(\| \delta_\sigma \|^2),
\end{align*}
where $\delta_\sigma = \sigma - \sigma_0$, and where we used the implicit 
differentiation rule to compute 
$ \Psi_i(r_k, x_k, \lambda_k, \sigma)  = 
	\frac{\partial \mc F_i }{\partial \sigma}  + 
	 \frac{d \mc F_i }{d \supscr{r}{c} }\; \frac{d \supscr{r}{c} }{d \sigma}$, 
with ${d \supscr{r}{c} }/{d \sigma} = \eta_1$.
By substituting into \eqref{eq:marginResDiscrete} and by rearranging the terms 
we obtain 
\begin{align*}
B_i - \mc F_i (x_k, r_k(\sigma_0), &\lambda_k)  + 
o(\| \delta_\sigma \|^2) \leq
\Psi_i(r_k, x_k, \lambda_k, \sigma) \delta_\sigma.
\end{align*}
Finally, we take the $L^1$-norm on both sides of the above inequality, 
which yields
\begin{align*}
\vert B_i &- \mc F_i (x_k, r_k(\sigma_0), \lambda_k)  +  o(\| \delta_\sigma \|^2) \vert \\
& \leq \left\vert \Psi_i(r^{\text{c}}, x_k, \lambda_k, \sigma) 
\delta_\sigma \right\vert \leq
\left\Vert \Psi_i(r^{\text{c}}, x_k, \lambda_k, \sigma)  \right\Vert_\infty
\left\Vert (\delta_\sigma) \right\Vert_1
\end{align*}
where we used Holder's inequality \cite{GHH-JEL-GP:88}.
To conclude, we iterate the above reasoning for all times $k \in \until h$, which 
yields the given bound for the margin of resilience and concludes the proof.
\end{proof}

We conclude this section by observing that the quantity 
$\Psi_i(r_k, x_k, \lambda_k, \sigma)$ is also a constraint of \eqref{opt:discrete}, and 
thus can be directly computed from the output of the optimization.
The tightness of the bound and the implications of the theorem are 
discussed in the next section.

\begin{figure}[t]
  \centering
    \includegraphics[width=.85\columnwidth]{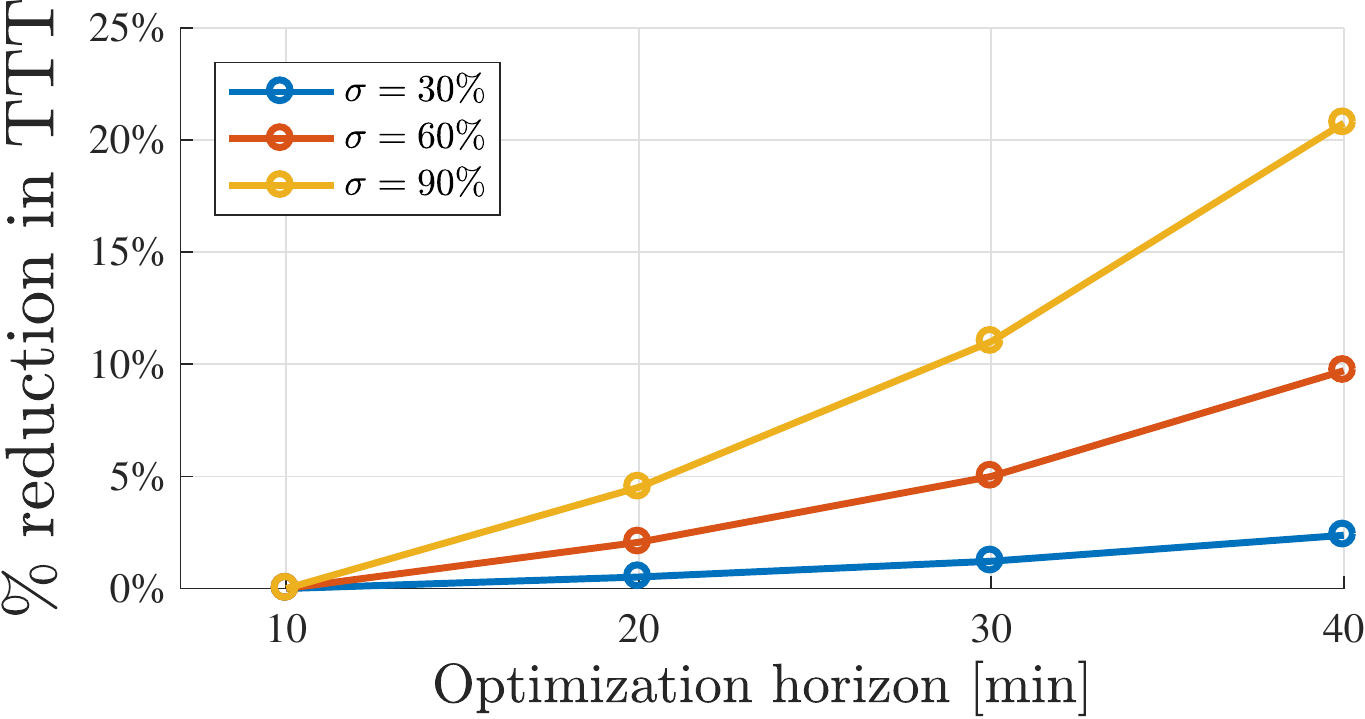}
\caption[]{Travel time reduction for degrees of trust and optimization 
horizons.}
  \label{fig:TTTimprovement_varySigma}
\end{figure}
\section{Simulation Results}
\label{sec:simulations}
This section provides numerical simulations in support to the 
assumptions  made in this paper, and includes discussions and 
demonstrations of  the benefits of the proposed methods.
We consider the network shown in 
Fig.~\ref{fig:networkTopology}, which comprises $ n =16$ links and $m = 7$ 
nodes.
Each link has capacity $B_i =B= 200 \text{veh}$, length 
$L_i =L= 5.25 \text{mi}$,  and velocity $v_i =v= 35 \text{mi/h}$.
For all $i$, we let $d_i(x_i) = v(1-\text{exp}(-a x_i))$, $a = 0.01$, and 
$s_i(x_i) = \frac{v}{L}(B-x_i)$ be the link demand and supply 
functions, respectively, and choose $\kappa_i(x)$ according to a
proportional allocation rule \cite{EL-GC-KS:14}.
We let $T_s = 0.15\text{h}$, and observe that $\max_i \frac{v T_s}{L} = 1$ satisfies 
the Courant-Friedrichs-Lewy assumption \cite{CFD:95}.
We let the network inflows be $\lambda_i = 10 \text{veh/min}$ for all 
$i \in \mc E^\textup{on}$, and assume that the density of the each link at time 
$t=0$ is $100\text{veh/mi}$, for all $i \in \mc E$.
The selfish turning preferences are chosen so that $r^\text{s}_{ij}$ is split 
uniformly between the outgoing links at every node.
Moreover, we assume $\sigma_i = \sigma$ for all $i$.

We begin by evaluating the benefits of partially controlling the network routing.
Fig.~\ref{fig:TTTimprovement_varySigma} illustrates the reduction in Total Travel 
Time in relation to different trust levels. 
The figure highlights that a consistent reduction in Total Travel Time is the 
combined result of significant levels of trust in the provided routing suggestions 
and of considerably-large control horizons.
Next, we investigate the network resilience in relation to changes in $\sigma$  
(Fig.~\ref{fig:linksResilience} and  \ref{fig:robustnessImprovement}).
To this aim, we show in Fig.~\ref{fig:linksResilience} the distance from jam density 
of every link in the network when drivers follow non-cooperative routing (i.e. 
$\sigma = 0$). 
Formally, this quantity is captured by the link residual capacity
\begin{align*}
\text{RC}_i :=  \min_k \;  \frac{B_i - \mc F_i(x_k, r_k, \lambda_k)}{B_i},
\end{align*}
which is a measure of the distance between the 
link density over time and its jam density $B_i$.
Note that, for the considered case study, all links operate with less than $30\%$ 
of their residual capacity.
The lower bound on the links margin of resilience 
(Theorem~\ref{thm:lowerBoundResilience}) is show in 
Fig.~\ref{fig:robustnessImprovement}.
Two important implications follow from the simulation results illustrated in 
Fig.~\ref{fig:robustnessImprovement}.
First, the trends observed in the figure support our 
observation that partially controlling the routing can result in increased fragility.
In fact, $\rho_i(x_0)$ for $\sigma_0=0$ is strictly larger that $\rho_i(x_0)$ for 
$\sigma_0=30\%$ for  all $i \in \{4, \dots 16 \}\setminus\{5,12\}$. 
Second, values of $\rho_i(x_0)$ greater than $100\%$ (observed, for instance, on
 link $i=16$) 
imply that no feasible change in $\sigma$ can lead to a jam failure of that link, 
while values of $\rho_i(x_0) < 100\%$ imply that there exists a feasible 
perturbation in $\sigma$ that results in jam-failures of that link. 
We note that the values reported in Fig.~\ref{fig:robustnessImprovement} are 
consistent with the considered network topology. 
In fact, the dynamics of link $i=16$ are independent of the routing choices 
performed by the drivers in the rest of the network.

\section{Conclusions}
\label{sec:conclusions}
This paper proposes a real-time optimization framework to design routing 
suggestions with the goal of optimizing the travel time experienced by all users in 
dynamical transportation systems that operate at non-equilibrium points. 
Our framework allows us to quantify the reduction in Total Travel Time in relation to 
different levels of trust on the provided routing suggestions, and to design a 
technique to study the resilience of the network links against fluctuations in 
the trust levels.
Our results reveal a tradeoff between efficiency and resilience in a transportation 
system, demonstrating that partially controlling the routing can reduce the margin 
of resilience of certain links.
Interesting aspects that require further investigation include extending the findings
to more general network topologies, and the design of incentive mechanisms 
to regulate and control the trust parameters.

\begin{figure}[t]
  \centering
    \includegraphics[width=.9\columnwidth]{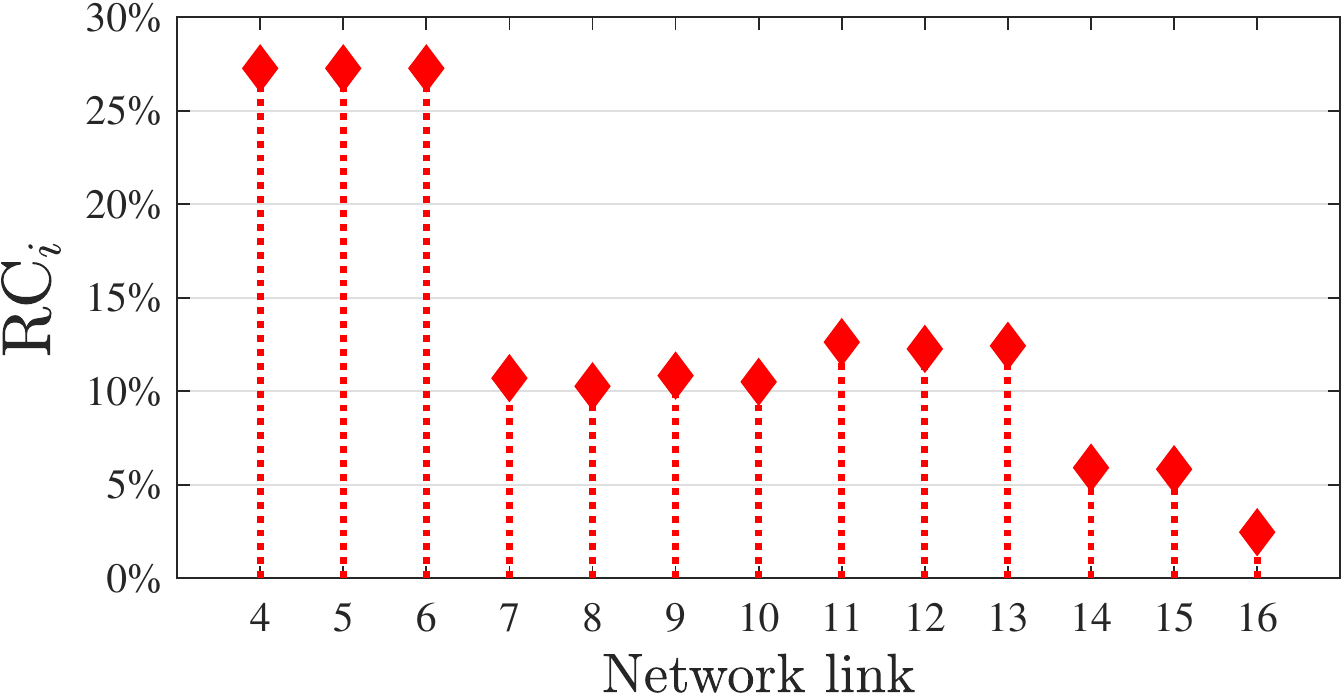}
\caption[]{Links distance from constraint violation (non-cooperative routing).}
  \label{fig:linksResilience}
\end{figure}
\begin{figure}[t]
  \centering
    \includegraphics[width=.9\columnwidth]{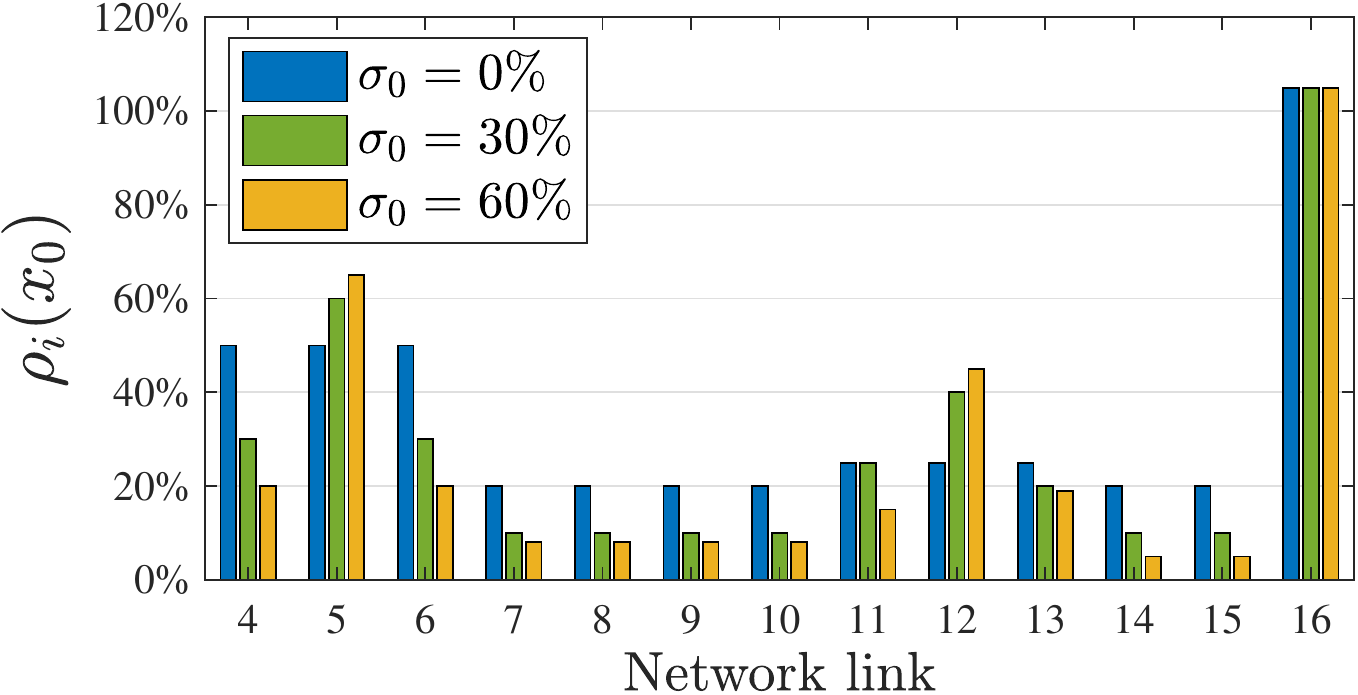}
\caption[]{Lower bound on links margin of resilience.}
  \label{fig:robustnessImprovement}
\end{figure}


\bibliographystyle{IEEEtran}
\bibliography{alias,Main,New,FP,BIB-GIANLUCA}
\end{document}